\documentclass{article}
\usepackage{amssymb,amsmath,amsfonts, amsthm, amscd}
\usepackage{arydshln}
\usepackage{float}
\usepackage{graphicx}
\usepackage{caption}
\usepackage{rotating}
\usepackage{pdflscape}
\usepackage{tikz}
\usepackage[utf8]{inputenc}
\usepackage{booktabs}
\usepackage{lipsum}
\usepackage{multirow}
\usepackage{enumitem}
\usepackage{subcaption}
\usepackage{multirow}
\usepackage{epsfig,amsthm,amsmath,amsfonts,amssymb}
\usepackage{fancyhdr,multicol,color,xcolor}
\usepackage[english]{babel}
\usepackage[utf8]{inputenc}
\usepackage{algorithm}
\usepackage[noend]{algpseudocode}
\lhead{}
\chead{}
\rhead{}

\numberwithin{equation}{section} \numberwithin{equation}{section}

\newcommand{\RNum}[1]{\uppercase\expandafter{\romannumeral #1\relax}}
\oddsidemargin .0in \evensidemargin .0in \textwidth 6 in
\topmargin-.25in \textheight 22cm
\footskip2\baselineskip
\columnseprule0.1pt
\oddsidemargin0pt
\parskip 0.25\baselineskip
\parindent 3mm
\label{sec.sub.gdn}

\newtheorem{definition}{Definition}
\newtheorem{corollary}{Corollary}
\newtheorem{ex}{Example}
\newtheorem{theorem}{Theorem}

\newtheorem{remark}{Remark}

\numberwithin{equation}{section}

\begin{document}

	\title{\bf  
		Analysis of  Linear Systems over Idempotent Semifields}
	\author{{Fateme Olia\textsuperscript{$a$}, Shaban Ghalandarzadeh\textsuperscript{$a$}, Amirhossein Amiraslani\textsuperscript{$b,a$}, Sedighe Jamshidvand\textsuperscript{$a$}}\\
		{\em \small   $^a$ Faculty of Mathematics, K. N. Toosi University of
			Technology, Tehran, Iran} \\
		{\em \small   $^b$ STEM Department, The University of Hawaii-Maui College, Kahului, Hawaii, USA }}
	\maketitle

\begin{abstract}In this paper, we present and analyze methods for solving a system of linear equations over idempotent semifields. The first method is based on the pseudo-inverse of the system matrix. We then present a specific version of Cramer's rule which is also related to the pseudo-inverse of the system matrix. In these two methods, the constant vector plays an implicit role in solvability of the system.  Another method is called the normalization method in which both the system matrix and the constant vector play explicit roles in the solution process. Each of these methods yields the maximal solution if it exists. Finally, we show the maximal solutions obtained from these methods and some previous  methods are all identical.	
	\end{abstract}
	
	{\small {\it key words}: Semiring; Idempotent semifield; Linear system of equations }\\[0.3cm]
	{\bf AMS Subject Classification:} 16Y60, 12K10, 65F05, 15A06.
		
	\section{Introduction}
	Solving linear systems of equations is an important aspect of many mathematical problems and their applications in various areas of
	science and engineering. Recently, idempotent semifields have been finding applications in computer science, optimization theory and control theory (see~\cite{Aceto},~\cite{McEneaney},~\cite{krivulin1}). This makes linear system solution over idempotent semifields ever so important. In this work, we intend to extend the solution methods presented in \cite{jam.cramer} and \cite{oli.norm}, namely, the pseudo-inverse method, the extended Cramer's rule, and the normalization method, to idempotent semifields.
	
	The notion of semirings was first introduced by Vandiver \cite{vandiver} in 1934 as a generalization of rings, where division and subtraction are not necessarily defined.  A semiring $(S,\oplus,\otimes,0,1)$ is an algebraic structure in which $(S, \oplus )$ is a commutative monoid with an identity element $0$ and $(S,\otimes)$ is a monoid with an identity element 1, connected by ring-like distributivity. The additive identity $0$ is multiplicatively absorbing, and $0 \neq 1$. A commutative semiring in which every nonzero element is multiplicatively invetrible is called a semifield.
	
	We obtain maximal solutions through the proposed methods for solving linear systems over idempotent semifields if they exist. A maximal solution is determined with respect to the defined total order on the idempotent semifield. Importantly, we prove that the maximal solutions obtained from the presented methods are all identical.
	
	In Section~3, we first present a necessary and sufficient condition based on the pseudo-inverse of the system matrix, whenever the determinant of the system matrix in its idempotent semifield is nonzero. We also propose the extended Cramer's rule to find the maximal solution. Moreover, by introducing the normalization method over idempotent semifields, we present an equivalent condition on column minimum elements of the associated normalized matrix of a linear system to determine the maximal solution.
	
	Section~4 concerns the comparison of the maximal solutions obtained from the presented methods in Section~3. We show that the pseudo-inverse method and the normalization method are equivalent. Finally, we prove that the column minimum elements of the associated normalized matrix of a linear system are the same as the maximal solution obtained from the $ LU $-method, which is introduced in \cite{jam.LU}.
	
	\section{Definitions and preliminaries}
	In this section, we give some definitions and preliminary notions. For convenience, we use $ \mathbb{N} $, and $\underline{n}$ to denote the set of all positive integers, and the set $\{ 1,2,\cdots,n\}$ for $n \in \mathbb{N}$, respectively.
	\begin{definition}(See \cite{golan})
		A semiring $ (S,\oplus,\otimes, 0,1)$ is an algebraic system consisting of a nonempty set $ S $ with two binary operations,  addition and multiplication, such that the following conditions hold:
		\begin{enumerate}
			\item{$ (S, \oplus) $ is a commutative monoid with identity element $ 0 $};
			\item{ $ (S, \otimes) $ is a monoid with identity element $ 1 $};
			\item{ Multiplication distributes over addition from either side, that is $ a\otimes(b\oplus c)= (a\otimes b)\oplus (a\otimes c) $ and $ (b \oplus c)\otimes a=(b\otimes a) \oplus (c\otimes a) $ for all $ a, b, c \in S $};
			\item{ The neutral element of $ S $ is an absorbing element, that is $ a\otimes 0 =0= 0 \otimes a  $ for all $ a \in S $};
			\item{ $ 1 \neq 0 $}.
		\end{enumerate}
	\end{definition}
	A semiring $ S $ is called commutative if $ a\otimes b = b \otimes a $. It is zerosumfree if $a\oplus b=0$ implies that $a=0=b$, for all $ a, b \in S $.
	\begin{definition}
		A semiring $S$ is called additively idempotent if  $a\oplus a=a $, for every $ a\in S $ .\\
	\end{definition}
	\begin{definition}
		A commutative semiring $(S,\oplus,\otimes,0,1)$ is called a semifield if every nonzero element of $S$ is multiplicatively invetrible.
	\end{definition}
	\begin{definition}
		The semifield $(S,\oplus, \otimes, 0,1)$ is idempotent if it is an additively idempotent, totally ordered, and radicable semifield. Note that the radicability implies the power $a^{q}$ be defined for any $a \in S \setminus \{0\}$ and $q \in \mathbb{Q}$ (rational numbers). In particular, for any non-negative integer $p$ we have
		\begin{center}
			$a^{0}=1 ~~~~~~~ a^{p}=a^{p-1}a~~~~~ a^{-p}=(a^{-1})^{p} $
		\end{center}
	\end{definition}
	The totally ordered operator ``$\leq_{S} $" on an idempotent semifield  defines the following partial order:
	\begin{center}
		$a \leq_{S} b\Longleftrightarrow a\oplus b = b $,
	\end{center}
	that is induced by additive idempotency. Notice that the last partial order equips addition to an extremal property in the form of the following inequalities
	$$ a \leq_{S} a \oplus b ,~~~~~~ b\leq_{S} a \oplus b ,$$
	which makes idempotent semifields zerosumfree, since we have $a \geq_{S} 0$ for any $ a  \in S$.\\ The total order defined on $ S $ is compatible with the algebraic operations, that is:
	$$ (a \leq_{S} b \wedge c \leq_{S} d) \Longrightarrow a\oplus c \leq_{S} b \oplus d, $$
	$$ (a \leq_{S} b \wedge c \leq_{S} d) \Longrightarrow a\otimes c \leq_{S} b \otimes d,$$
	for any $ a,b,c,d \in S $.
	Throughout this article, we consider the notation $``\geq_{S}" $ as the converse of the order $ ``\leq_{S}" $, which is a totally ordered operator satisfying $ a \geq_{S} b $ if and only if $ b\leq_{S} a $, for any $ a, b \in S $. Furthermore, we use  $ ``a <_{S} b" $ whenever, $ a\leq_{S} b $ and $ a \neq b$. The notation $ ``>_{S} " $ is defined similarly.
	\begin{definition}
		Let $ (S, \oplus, \otimes, 0_{S}, 1_{S} ) $ be a semifield. A semivector space over a semifield $ S $ is a commutative monoid $ (\mathcal{V},+) $ with identity element $ 0_{\mathcal{V}}$ for which we have a scalar multiplication function $S \times \mathcal{V} \longrightarrow \mathcal{V} $, denoted by $ (s, v)\mapsto sv $, which satisfies the following conditions for all $ s, s' \in S $ and $ v, v' \in \mathcal{V} $:
		\begin{enumerate}
			\item{$ (s\otimes s')m=s(s'v) $ };
			\item{$ s(v+v')= sv +sv'$};
			\item{$ (s\oplus s')v=sv +s'v $};
			\item{$ 1_{S}v=v$};
			\item{$ s0_{\mathcal{V}}=0_{\mathcal{V}}=0_{S}v$}.
		\end{enumerate}
		The elements of a semivector space $ \mathcal{V} $ are called vectors.
	\end{definition}
	Let $ \mathcal{W} $ be a nonempty subset of $ \mathcal{V} $. Then the expression $ \displaystyle{\sum_{i=1}^{n}}s_{i}\omega_{i} $ with $ \omega_{i} \in \mathcal{W} $, for any $ i \in \underline{n}$ is a linear combination of the elements of $ \mathcal{W} $. Indeed, the nonempty subset $ \mathcal{W} $ of vectors is called linearly independent if no vector of $ \mathcal{W} $ is a linear combination of other vectors of $ \mathcal{W} $. Otherwise, it is a linearly dependent subset (see \cite{taninner}).
	
	Let $S$ be an idempotent semifield. We denote the set of all $m \times n$ matrices over $S$ by $M_{m \times n}(S)$.
	For $A \in M_{m \times n} (S)$, we denote by $a_{ij}$ and $A
	^{T}$ the $(i,j)$-entry of $A$ and the transpose of $A$, respectively.\\
	For any $A, B \in M_{m \times n}(S)$, $C \in M_{n \times l}(S)$ and $\lambda \in S$, we define:
	$$A+B = (a_{ij} \oplus b_{ij})_{m \times n},
	$$ $$AC=(\bigoplus_{k=1}^{n} (a_{ik} \otimes b_{kj}))_{m \times l},$$
	and $$\lambda A=(\lambda \otimes a_{ij})_{m\times n}.$$
	It is easy to verify that $M_{n}(S):=M_{n \times n}(S)$ forms a semiring with respect to the matrix addition and the matrix multiplication.
	\begin{definition}(See \cite{wilding})
		The set of all linear combinations of columns (rows) of an arbitrary matrix $ A $, denoted by $ Col(A) ~(Row(A)) $, is the column (row) space of $ A $. The smallest $ n \in\mathbb{N} $ for which there exists a set of generator of $ Col(A) ~(Row(A)) $ with cardinality $ n $ is the column (row) rank of $ A$. The column rank and the row rank of a matrix over an idempotent semifield are not necessarily equal.
	\end{definition}

	Let $A \in M_{n}(S)$, $\mathcal{S}_{n}$ be the symmetric group of degree $n \geq 2$ , and $\mathcal{A}_{n}$ be the alternating group on $n$ such that
	$$\mathcal{A}_{n}= \{ \sigma \vert \sigma \in \mathcal{S}_{n} ~\text{and}~\sigma \text{~is~an~even~permutation}\}.$$
	The positive determinant, $\vert A \vert ^{+}$, and negative determinant, $\vert A \vert ^{-}$, of $A$ are
	$$\vert A \vert ^{+}=\bigoplus_{\sigma \in \mathcal{A}_{n}} \bigotimes_{i=1}^{n} a_{i\sigma (i)},$$
	and
	$$\vert A \vert ^{-}= \bigoplus_{\sigma \in \mathcal{S}_{n} \backslash \mathcal{A}_{n}} \bigotimes_{i=1}^{n} a_{i\sigma (i)}. $$
	Clearly, if $S$ is a commutative ring, then $\vert A \vert=\vert A \vert ^{+} - \vert A \vert ^{-}$.
	\begin{definition}
		Let $S$ be a semiring. A bijection $\varepsilon$ on $S$ is called an $\varepsilon$-function of $S$ if $\varepsilon(\varepsilon(a))=a$, $\varepsilon(a \oplus b)= \varepsilon(a) \oplus \varepsilon(b)$, and $\varepsilon(a \otimes b)=\varepsilon(a) \otimes b=a \otimes \varepsilon(b)$ for all $a, b \in S$. Consequently, $\varepsilon(a) \otimes \varepsilon(b)=a \otimes b$ and $\varepsilon(0)=0$.\\
		The identity mapping: $a \mapsto a$ is an $\varepsilon$-function of $S$ that is called the identity $\varepsilon$-function.
	\end{definition}
	\begin{remark}
		Any semiring $S$ has at least one $\varepsilon$-function since the identical mapping of $S$ is an $\varepsilon$-function of $S$. If $S$ is a ring, then the mapping : $a \mapsto -a$, $( a \in S)$ is an $\varepsilon$-function of $S$.
	\end{remark}
	\begin{definition}
		Let $S$ be a commutative semiring with an $\varepsilon$-function $\varepsilon$, and $A \in M_{n}(S)$. The $\varepsilon$-determinant of $A$, denoted by $\det_{\varepsilon}(A)$, is defined by
		$$ det_{\varepsilon}(A)= \bigoplus_{\sigma \in \mathcal{S}_{n}}\varepsilon ^{\tau(\sigma)}(a_{1\sigma(1)} \otimes a_{2\sigma(2)} \otimes \cdots \otimes a_{n\sigma(n)})$$
		where $\tau(\sigma)$ is the number of the inversions of the permutation $\sigma$, and $\varepsilon^{(k)}$ is defined by $\varepsilon^{(0)}(a)=a$ and $\varepsilon^{(k)}(a)=\varepsilon^{(k-1)}(\varepsilon(a))$ for all positive integers $k$.
		Since $\varepsilon^{(2)}(a)=a$, $\det_{\varepsilon}(A)$ can be rewritten in the form of		
		$$det_{\varepsilon}(A)=\vert A \vert ^{+} \oplus \varepsilon(\vert A \vert^{-})$$.
	\end{definition}
	Let $(S,\oplus, \otimes, 0,1)$ be an idempotent semifield, $A \in M_{m\times n}(S)$, and $b \in S^{m}$ be a column vector. Then the $i-$th equation of the linear system $AX=b$ is
	$$ \bigoplus_{j=1}^{n}( a_{ij} \otimes x_{j})= (a_{i1}\otimes x_{1})\oplus(a_{i2}\otimes x_{2}) \oplus \cdots \oplus (a_{in} \otimes x_{n}) =b_{i}.$$
	\begin{definition}
		A solution $X^{*}$ of the system $AX=b$ is called maximal if $X \leq X^{*}$ for any solution $X$.
	\end{definition}
	\begin{definition}
		A vector $b\in S^{m}$ is called  regular if it has no zero element.
	\end{definition}
	Without loss of generality, we can assume that $b$ is regular in the system $AX=b$. Otherwise, let $b_{i}=0$ for some $i \in \underline{n}$. Then in the $i-$th equation of the system, we have $a_{ij}\otimes x_{j}=0$ for any $j \in \underline{n}$, since $S$ is zerosumfree. As such, $x_{j}=0$ if $a_{ij}\neq 0$.  Consequently, the $i-$th equation can be removed from the system together with every column $A_{j}$ where $a_{ij} \neq 0$, and the corresponding $x_{j}$ can be set to $0$.\\
	\begin{definition}
		Let the linear system of equations $ AX=b $ have solutions. Suppose that $ A_{j_{1}}, A_{j_{2}}, \cdots, A_{j_{k}} $ are linearly independent columns of $ A $, and $ b $ is a linear combination of them. Then the corresponding variables, $ x_{j_{1}}, x_{j_{2}}, \cdots, x_{j_{k}} $ , are called leading variables and other variables are called free variables of the system $ AX= b$. \\
		The degrees of freedom of the linear system  $ AX=b $, denoted by $ \mathcal{D}_{f}$, is the number of free variables.
	\end{definition}
	\section{Methods for solving linear systems over idempotent semifields }
	In this section, we present methods to solve a linear system of equations over an idempotemt semifield. Those methods are pseudo-inverse method, extended Cramer's rule and normalization method, which determine the maximal solution of the system. Throughout this section, we consider $ S $ as an idempotent semifield.
	\begin{definition}
		Let $A \in M_{n}(S)$ and $\varepsilon$ be an $\varepsilon$-function on $ S $. The $\varepsilon$-adjoint matrix $A$, denoted by $adj_{\varepsilon}(A)$, is defined as follows.
		$$ adj_{\varepsilon}(A)=((\varepsilon^{(i+j)} det_{\varepsilon}(A(i | j)))_{n \times n})^{T}, $$
		where $ A(i|j) $ is the $ (n-1) \times (n-1) $ submatrix of $ A $ obtained from $ A $ by removing the $ i $-th row and the $ j $-th column.
	\end{definition}
	\begin{theorem} (See \cite{tan1})\label{r-c} Let $A \in M_{n}(S)$. We have
		\begin{enumerate}
			\item $Aadj_{\varepsilon}(A)=(\det_{\varepsilon}(A_{r}(i \Rightarrow j)))_{n \times n}$,
			\item $adj_{\varepsilon}(A)A=(\det_{\varepsilon}(A_{c}(i \Rightarrow j)))_{n \times n}$,
		\end{enumerate}
		where $A_{r}(i \Rightarrow j)$ ($A_{c}(i \Rightarrow j)$) denotes the matrix obtained from $A$  by replacing the $j$-th row (column) of $A$ by the $i$-th row (column) of $A$.
	\end{theorem}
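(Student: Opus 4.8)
The plan is to prove part (1) by computing both sides directly from the defining sum for $\det_\varepsilon$ and matching them term by term; part (2) will then follow either by the mirror-image argument applied to columns, or by transposition.

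First I would fix $i,j$ and unwind $\det_\varepsilon(A_r(i\Rightarrow j))$ from its definition. Writing $M=A_r(i\Rightarrow j)$, we have $m_{l\ell}=a_{l\ell}$ for $l\neq j$ and $m_{j\ell}=a_{i\ell}$, so $\det_\varepsilon(M)=\bigoplus_{\sigma\in\mathcal{S}_n}\varepsilon^{(\tau(\sigma))}\bigl(a_{i\sigma(j)}\otimes\bigotimes_{l\neq j}a_{l\sigma(l)}\bigr)$. The key step is to partition $\mathcal{S}_n$ according to the value $k=\sigma(j)$ and, for each such $k$, to reindex the restriction $\sigma|_{\underline{n}\setminus\{j\}}\colon\underline{n}\setminus\{j\}\to\underline{n}\setminus\{k\}$ via the two order-preserving bijections with $\underline{n-1}$, obtaining a permutation $\tilde\sigma\in\mathcal{S}_{n-1}$. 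The combinatorial heart of the argument is the parity identity $\tau(\sigma)\equiv\tau(\tilde\sigma)+j+k\pmod 2$: the inversions of $\sigma$ that involve position $j$ number $j+k$ modulo $2$, and the remaining ones are exactly the inversions of $\tilde\sigma$. Since $\varepsilon^{(2)}=\mathrm{id}$, the operator $\varepsilon^{(m)}$ depends only on $m\bmod 2$, hence $\varepsilon^{(\tau(\sigma))}=\varepsilon^{(j+k)}\circ\varepsilon^{(\tau(\tilde\sigma))}$.

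Next, using the $\varepsilon$-function identities $\varepsilon(a\otimes b)=\varepsilon(a)\otimes b=a\otimes\varepsilon(b)$ and $\varepsilon(a\oplus b)=\varepsilon(a)\oplus\varepsilon(b)$ to move the $\varepsilon$-powers onto individual factors and through sums, the inner sum over all $\sigma$ with $\sigma(j)=k$ collapses to $a_{ik}\otimes\varepsilon^{(j+k)}\bigl(\bigoplus_{\tilde\sigma\in\mathcal{S}_{n-1}}\varepsilon^{(\tau(\tilde\sigma))}(\textstyle\bigotimes a_{l\sigma(l)})\bigr)=a_{ik}\otimes\varepsilon^{(j+k)}\bigl(\det_\varepsilon(A(j\,|\,k))\bigr)$, because the entries $a_{l,m}$ with $l\neq j$, $m\neq k$ are precisely the entries of the minor $A(j\,|\,k)$. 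Summing over $k$ gives $\det_\varepsilon(A_r(i\Rightarrow j))=\bigoplus_{k=1}^n a_{ik}\otimes\varepsilon^{(k+j)}\bigl(\det_\varepsilon(A(j\,|\,k))\bigr)$. On the other hand, the defining transpose in $adj_\varepsilon(A)$ makes its $(k,j)$-entry equal to $\varepsilon^{(k+j)}\det_\varepsilon(A(j\,|\,k))$, so $(A\,adj_\varepsilon(A))_{ij}=\bigoplus_{k=1}^n a_{ik}\otimes\varepsilon^{(k+j)}\bigl(\det_\varepsilon(A(j\,|\,k))\bigr)$ — the same expression — which proves (1).

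For part (2) I would rerun the identical argument with a Laplace-type expansion along a column rather than a row; equivalently, apply (1) to $A^{T}$ after first checking the two routine facts $\det_\varepsilon(B^{T})=\det_\varepsilon(B)$ (reindex by $\sigma\mapsto\sigma^{-1}$, using $\tau(\sigma)=\tau(\sigma^{-1})$) and $adj_\varepsilon(B^{T})=(adj_\varepsilon(B))^{T}$ (since $B^{T}(j\,|\,i)=(B(i\,|\,j))^{T}$), and then transposing back. The main obstacle is really the parity bookkeeping of the first step; everything else is a matter of carefully matching indices and pushing the $\varepsilon$-powers through products and sums. One should also keep a close eye on the direction of the replacement when passing between $A_{c}(i\Rightarrow j)$ and $A_{c}(j\Rightarrow i)$ under transposition, as these are genuinely distinct matrices over a semifield even though they agree in the classical ring case.
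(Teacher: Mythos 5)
The paper offers no proof of this theorem at all --- it is quoted verbatim from \cite{tan1} --- so there is no internal argument to compare against; judged on its own, your treatment of part (1) is correct and is the expected one. It amounts to re-deriving the Laplace expansion of $\det_{\varepsilon}$ along the replaced row: your parity count is right (the inversions of $\sigma$ involving position $j$ number $(j-1-p)+(k-1-p)$ with $p=\vert\{m<j:\sigma(m)<k\}\vert$, hence $\equiv j+k \pmod 2$), the $\varepsilon$-identities let you pull $\varepsilon^{(j+k)}$ out of the inner sum, and the resulting expression $\bigoplus_{k} a_{ik}\otimes\varepsilon^{(j+k)}\bigl(\det_{\varepsilon}(A(j\,\vert\,k))\bigr)$ is exactly $(A\,adj_{\varepsilon}(A))_{ij}$ and exactly the row-$j$ expansion of $A_{r}(i\Rightarrow j)$. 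This is in the spirit of the source, which the present paper itself invokes as Laplace's theorem (Theorem~3.3 of \cite{tan1}) in the proof of Theorem~\ref{cramerthm}.

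The genuine gap is in part (2), and you half-noticed it without resolving it. Carrying out either of your proposed routes honestly gives $(adj_{\varepsilon}(A)A)_{ij}=\bigoplus_{k}\varepsilon^{(i+k)}\bigl(\det_{\varepsilon}(A(k\,\vert\,i))\bigr)\otimes a_{kj}$, which is the expansion along the $i$-th column of the matrix obtained from $A$ by replacing its \emph{$i$-th} column with its \emph{$j$-th} column, i.e.\ $\det_{\varepsilon}(A_{c}(j\Rightarrow i))$ in the notation as glossed in the statement; the transposition route gives the same outcome, since $(A^{T})_{r}(j\Rightarrow i)=(A_{c}(j\Rightarrow i))^{T}$. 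That is the transpose of the literal claim. The discrepancy is not hair-splitting: for $n=2$ with the identity $\varepsilon$-function and $A=\bigl(\begin{smallmatrix} a & b\\ c & d\end{smallmatrix}\bigr)$ one gets $(adj_{\varepsilon}(A)A)_{12}=b\otimes d=\det_{\varepsilon}(A_{c}(2\Rightarrow 1))$, whereas $\det_{\varepsilon}(A_{c}(1\Rightarrow 2))=a\otimes c$. So to finish you must either read $A_{c}(i\Rightarrow j)$ as ``replace the $i$-th column by the $j$-th column'' (then your argument closes the proof) or state part (2) with $i$ and $j$ interchanged on the right-hand side; the diagonal entries, which are all the paper actually uses (e.g.\ $(A^{-}A)_{ii}=1$ in Theorem~\ref{psuedothm}), are unaffected. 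Two small remarks: your aside that $A_{c}(i\Rightarrow j)$ and $A_{c}(j\Rightarrow i)$ ``agree in the classical ring case'' is off --- the matrices differ; it is their determinants that coincide over a ring (both vanish off the diagonal), which is exactly why the direction is invisible classically and visible here; and over an idempotent semifield every $\varepsilon$-function is the identity (from $\varepsilon(a)=\varepsilon(1)\otimes a$ and $\varepsilon(1)^{2}=1$ in a totally ordered semifield), so in this paper's setting your $\varepsilon$-bookkeeping, while correct, is vacuous.
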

	\begin{definition}
		Let $A \in M_{n}(S)$ and $\det_{\varepsilon}(A)$ be a nonzero element of $ S $. Then the square matrix $ \det_{\varepsilon}(A)^{-1} adj_{\varepsilon}(A)$, denoted by $ A^{-}$, is called the pseudo-inverse of $A$.
	\end{definition}
	\begin{corollary}\label{AA^{-}cor}
		Let $ A\in M_{n}(S)$. Then the elements of the multiplication matrix   $ AA^{-} $ are
		$$ (AA^{-})_{ij}= det_{\varepsilon}(A)^{-1} det_{\varepsilon}(A_{r}(i \Rightarrow j)). $$
		In particular, the diagonal entries of the matrix  $AA^{-}$ are $ 1 $. Furthermore, the entries of the matrix $ A^{-}A $ are defined analogusly.
	\end{corollary}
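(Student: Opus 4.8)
The plan is to read the claim off directly from Theorem~\ref{r-c}, combined with the fact that scalar multiplication commutes with matrix multiplication over a commutative semiring. First I would recall that, by definition, $A^{-}=\det_{\varepsilon}(A)^{-1}\,adj_{\varepsilon}(A)$, which is well defined precisely because $\det_{\varepsilon}(A)$ is a nonzero element of the semifield $S$ and hence multiplicatively invertible. Since $S$ is commutative, for any $\lambda\in S$ and conformable matrices $A,B$ one has $A(\lambda B)=\lambda(AB)$: the $(i,j)$-entry of $A(\lambda B)$ is $\bigoplus_{k}a_{ik}\otimes(\lambda\otimes b_{kj})=\lambda\otimes\bigoplus_{k}(a_{ik}\otimes b_{kj})=\lambda(AB)_{ij}$, using commutativity and distributivity. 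Therefore $AA^{-}=A\bigl(\det_{\varepsilon}(A)^{-1}\,adj_{\varepsilon}(A)\bigr)=\det_{\varepsilon}(A)^{-1}\bigl(A\,adj_{\varepsilon}(A)\bigr)$.

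Next I would invoke part~(1) of Theorem~\ref{r-c}, which states $A\,adj_{\varepsilon}(A)=(\det_{\varepsilon}(A_{r}(i\Rightarrow j)))_{n\times n}$. Multiplying this entrywise by the scalar $\det_{\varepsilon}(A)^{-1}$ gives exactly $(AA^{-})_{ij}=\det_{\varepsilon}(A)^{-1}\det_{\varepsilon}(A_{r}(i\Rightarrow j))$, which is the first formula. For the diagonal, I would observe that $A_{r}(i\Rightarrow i)$, the matrix obtained from $A$ by replacing its $i$-th row by its own $i$-th row, is just $A$; hence $\det_{\varepsilon}(A_{r}(i\Rightarrow i))=\det_{\varepsilon}(A)$ and $(AA^{-})_{ii}=\det_{\varepsilon}(A)^{-1}\det_{\varepsilon}(A)=1$. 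The statement for $A^{-}A$ follows by the symmetric argument, now using part~(2) of Theorem~\ref{r-c}: $A^{-}A=\det_{\varepsilon}(A)^{-1}\bigl(adj_{\varepsilon}(A)A\bigr)=\det_{\varepsilon}(A)^{-1}(\det_{\varepsilon}(A_{c}(i\Rightarrow j)))_{n\times n}$, so $(A^{-}A)_{ij}=\det_{\varepsilon}(A)^{-1}\det_{\varepsilon}(A_{c}(i\Rightarrow j))$, again equal to $1$ on the diagonal since $A_{c}(i\Rightarrow i)=A$.

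I do not expect any genuine obstacle: the argument is essentially an application of the previously established identities for $A\,adj_{\varepsilon}(A)$ and $adj_{\varepsilon}(A)A$. The only points that deserve an explicit line are the factoring-out of the scalar $\det_{\varepsilon}(A)^{-1}$ through the matrix product, which relies on commutativity of $S$, and the trivial identification of $A_{r}(i\Rightarrow i)$ (respectively $A_{c}(i\Rightarrow i)$) with $A$ that forces the diagonal entries to be $1$.
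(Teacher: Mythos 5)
Your proposal is correct and follows essentially the same route as the paper: factor the scalar $\det_{\varepsilon}(A)^{-1}$ out of the product and apply Theorem~\ref{r-c}, with $A_{r}(i\Rightarrow i)=A$ giving the diagonal entries $1$. The paper's own proof only writes out the diagonal case explicitly, so your spelled-out treatment of the off-diagonal entries and of $A^{-}A$ is just a fuller version of the same argument.
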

	\begin{proof}
		\begin{align*}
		(AA^{-})_{ii} &\ = det_{\varepsilon}(A)^{-1}(Aadj_{\varepsilon}(A))_{ii}\\
		&\ = det_{\varepsilon}(A)^{-1} det_{\varepsilon}(A_{r}(i \Rightarrow i))\\
		&\ = det_{\varepsilon}(A)^{-1} det_{\varepsilon}(A)\\
		&\ =  1
		\end{align*}
	\end{proof}
	\paragraph{\textbf{Pseudo-inverse method.}} First we introduce the pseudo-inverse method to obtain a maximal solution of the linear system. In the following theorem, we present a necessary and sufficient condition on the system matrix over idempotent semifields which is an extension of Theorem~3 in \cite{jam.cramer} for ``$ \max-plus$ algebra ".
	\begin{theorem}\label{psuedothm}
		Let $ AX=b $, where $ A \in M_{n}(S) $, and $ b $ be a regular vector of size $ n $. Then the system $ AX=b $ has the maximal solution $X^{*}=A^{-}b $ with $X^{*}=(x_{i}^{*})_{i=1}^{n}$ if and only if $(AA^{-})_{ij}\otimes b_{j} \leq_{S} b_{i} $ for any $i,j \in \underline{n}$.
	\end{theorem}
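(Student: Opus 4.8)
The plan is to reduce the whole statement to two elementary facts. The first is that in an idempotent semifield $\oplus$ agrees with the maximum for the total order $\leq_{S}$, so that every summand of a finite sum is dominated by the sum; this is just the extremal inequality $c\leq_{S}c\oplus d$ from Section~2 applied repeatedly. The second is Corollary~\ref{AA^{-}cor}, which gives $(AA^{-})_{ij}=\det_{\varepsilon}(A)^{-1}\det_{\varepsilon}(A_{r}(i\Rightarrow j))$ together with $(AA^{-})_{ii}=1$, and the analogous statements for $A^{-}A$. Throughout, $\det_{\varepsilon}(A)\neq 0$ (so that $A^{-}$ is defined) and $b$ is regular, as in the hypotheses.

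For the direction ``$\Rightarrow$'', suppose the system has the maximal solution $X^{*}=A^{-}b$. Since $X^{*}$ is in particular a solution, $b=AX^{*}=A(A^{-}b)=(AA^{-})b$ by associativity of multiplication in $M_{n}(S)$. Reading the $i$-th coordinate, $b_{i}=\bigoplus_{j=1}^{n}(AA^{-})_{ij}\otimes b_{j}$, and the extremal property gives $(AA^{-})_{ij}\otimes b_{j}\leq_{S}b_{i}$ for every $j$, which is exactly the asserted inequality.

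For the direction ``$\Leftarrow$'', assume $(AA^{-})_{ij}\otimes b_{j}\leq_{S}b_{i}$ for all $i,j\in\underline{n}$. First I would check that $X^{*}=A^{-}b$ is a solution: in coordinate $i$, the hypothesis yields $\bigoplus_{j=1}^{n}(AA^{-})_{ij}\otimes b_{j}\leq_{S}b_{i}$, while the $j=i$ term equals $(AA^{-})_{ii}\otimes b_{i}=1\otimes b_{i}=b_{i}$ by Corollary~\ref{AA^{-}cor}; hence the sum is exactly $b_{i}$, so $AX^{*}=(AA^{-})b=b$. Then I would prove maximality: for any solution $X$, left-multiplying $AX=b$ by $A^{-}$ and using associativity gives $A^{-}b=A^{-}(AX)=(A^{-}A)X$, so in coordinate $k$ we get $x_{k}^{*}=\bigoplus_{l=1}^{n}(A^{-}A)_{kl}\otimes x_{l}\geq_{S}(A^{-}A)_{kk}\otimes x_{k}=x_{k}$, again by Corollary~\ref{AA^{-}cor}. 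Thus $X\leq X^{*}$ for every solution $X$, so $X^{*}=A^{-}b$ is indeed the maximal solution.

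I do not expect a genuine obstacle here: the proof is essentially a bookkeeping of the extremal property of $\oplus$ together with the ``diagonal equals $1$'' consequence of the adjoint identities of Theorem~\ref{r-c}. The only points needing care are invoking associativity of matrix multiplication at the right spots and checking that Corollary~\ref{AA^{-}cor} is applied with the correct indices when claiming $(AA^{-})_{ii}=1$ and $(A^{-}A)_{kk}=1$; everything else is immediate.
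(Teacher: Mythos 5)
Your proof is correct, and the forward direction and the verification that $X^{*}=A^{-}b$ solves the system coincide with the paper's argument (extremal property of $\oplus$ plus the diagonal entries of $AA^{-}$ being $1$ from Corollary~\ref{AA^{-}cor}). Where you genuinely differ is the maximality step. The paper argues indirectly: it derives the inequalities $(A^{-}A)_{ij}\otimes x_{j}^{*}\leq_{S}x_{i}^{*}$, then takes a hypothetical second solution $Y$ assumed ``without loss of generality'' to differ from $X^{*}$ in a single coordinate, and extracts $y_{i}<_{S}x_{i}^{*}$ from the $i$-th equation of $A^{-}AY=X^{*}$, followed by a discussion of when the inequalities can be proper. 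Your route is more direct and cleaner: for an arbitrary solution $X$, associativity gives $(A^{-}A)X=A^{-}b=X^{*}$, and reading coordinate $k$ together with $(A^{-}A)_{kk}=1$ yields $x_{k}^{*}=\bigoplus_{l}(A^{-}A)_{kl}\otimes x_{l}\geq_{S}x_{k}$, hence $X\leq X^{*}$ with no case analysis and no reduction to solutions differing in one coordinate (a reduction whose ``without loss of generality'' status the paper does not justify). So your argument buys a shorter and fully general maximality proof from the same two ingredients, while the paper's version additionally records, as a by-product, the observation that strictness of all the inequalities forces uniqueness of the solution. One small point to keep explicit: the hypothesis $\det_{\varepsilon}(A)\neq 0$ (needed for $A^{-}$ to exist) and the fact that $(A^{-}A)_{kk}=1$ is the ``analogous'' half of Corollary~\ref{AA^{-}cor}, which you do flag correctly.
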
	
	\begin{proof}
		Assume that $X^{*}=A^{-}b $ is a maximal solution of the system $ AX=b $. Then $ AA^{-}b=b $, that is in the $i$-th equation of $ AA^{-}b=b $ for any $i \in \underline{n}$ we have
		$$ ((AA^{-})_{i1} \otimes b_{1}) \oplus ((AA^{-})_{i2} \otimes b_{2})\oplus ((AA^{-})_{in} \otimes b_{n})= b_{i}. $$
		The induced partial order on the idempotent semifield $ S $ implies that $(AA^{-})_{ij}\otimes b_{j} \leq_{S} b_{i} $ for any $ j \in \underline{n}$.\\
		Conversely, suppose that $(AA^{-})_{ij}\otimes b_{j} \leq_{S} b_{i} $ for any $i,j \in \underline{n}$. We prove that $X^{*}=A^{-}b $ is a solution of the system and it is maximal corresponding to the total order $ \leq_{S} $. Clearly, the $ i $-th component of the vector $ AX^{*} $ is
		\begin{align}
		(AX^{*})_{i} \nonumber &= (AA^{-}b)_{i}\\
		&=\bigoplus_{j=1}^{n}((AA^{-})_{ij} \otimes b_{j})\\
		&=\bigoplus_{\begin{subarray}{c}
			j=1 \\ j \neq i
			\end{subarray}}^{n} ((AA^{-})_{ij} \otimes b_{j}) \oplus ((AA^{-})_{ii} \otimes b_{i})\\
		&=b_{i}. \nonumber
		\end{align}
		The equalities $(3.1)$ and $(3.2)$ are obtained from Corollary~\ref{AA^{-}cor} and the assumption $(AA^{-})_{ij}\otimes b_{j} \leq_{S} b_{i} $ for any $ i,j \in \underline{n}$, respectively. As such, $ X^{*} $ is a solution of the system $ AX=b $.\\
		Now, we show that $ X^{*}=A^{-}b $ is the maximal solution of $ AX=b $. Since $ AX^{*}=b $, the $ i $-th equation of the system $A^{-}AX^{*}=X^{*}$ in the form
		$$ ((A^{-}A)_{i1} \otimes x^{*}_{1}) \oplus \cdots \oplus x^{*}_{i} \oplus \cdots \oplus ((A^{-}A)_{in} \otimes x^{*}_{n})= x^{*}_{i}  $$
		yields the inequalities
		\begin{equation}\label{ineq}
		(A^{-}A)_{ij} \otimes x^{*}_{j} \leq_{S} x^{*}_{i}
		\end{equation}
		for any $j \neq i$. We now consider another solution of the system such as $ Y=(y_{i})_{i}^{n}$, that is $AY=b$ and therefore $ A^{-}AY=X^{*} $. There is no loss of generality to assume that $ y_{i} \neq x^{*}_{i} $ for some $ i \in \underline{n} $ and $ y_{j} = x^{*}_{j} $ for any $ j \neq i $. Note that the $ i $-th equation of the system $ A^{-}AY=X^{*}  $ is
		$$ ((A^{-}A)_{i1} \otimes x^{*}_{1}) \oplus \cdots \oplus (A^{-}A)_{ii} \otimes y_{i} \oplus \cdots \oplus ((A^{-}A)_{in} \otimes x^{*}_{n})= x^{*}_{i}. $$
		Due to the induced partially ordered operator $`` \leq_{S}" $	over idempotent semifields, we have $ (A^{-}A)_{ii} \otimes y_{i} \leq_{S} x^{*}_{i} $ which implies $ y_{i} <_{S} x^{*}_{i} $, since $ (A^{-}A)_{ii}=1 $ and $ y_{i} \neq x^{*}_{i} $. Moreover, $ Y $ is a solution of the system $ AX=b $, so the inequalities~\ref{ineq} cannot be all proper. If all of them are proper, then   		
		$$ ((A^{-}A)_{i1} \otimes x^{*}_{1}) \oplus \cdots \oplus (A^{-}A)_{ii} \otimes y_{i} \oplus \cdots \oplus ((A^{-}A)_{in} \otimes x^{*}_{n}) <_{S} x^{*}_{i}, $$
		which is a contradiction. In fact, if all of the inequalities~(\ref{ineq}) are proper, then $ X^{*} $ is the unique solution. Otherwise, suppose that $ (A^{-}A)_{ij} \otimes x^{*}_{j}= x^{*}_{i} $ for some $ j \neq i $. Then $ Y $ is a solution of the system that satisfies $ Y \leq_{S} X^{*} $. Hence $ X^{*} $	is a maximal solution.
	\end{proof}
	In the following example, we use the necessary and sufficient condition $(AA^{-})_{ij}\otimes b_{j} \leq_{S} b_{i} $ for the system $ AX=b $ to have the maximal solution $ X^{*}=A^{-}b $.
	\begin{ex}\label{AA^{-}ex}
		Let $ A\in M_{4}(S) $, where $ S=\mathbb{R}_{\max,\times}=(\mathbb{R_{+}} \cup \{0\}, \max,\times,0,1) $, and $\mathbb{R_{+}} $ be the set of all positive real numbers. Then the defined total order on ``$\max-\rm times$ algebra" is the standard less than or equal relation $ `` \leq " $ over $\mathbb{R}$ and the multiplication  $ a \times b^{-1} $, denoted by  $ \dfrac{a}{b}$, is the usual real numbers division. Consider the following system $ AX=b $:	
		\[
		\left[
		\begin{array}{cccc}
		5&7&9&10\\
		4&2&0&7\\
		3&0&3&5\\
		1&8&1&6
		\end{array}
		\right]
		\left[
		\begin{array}{c}
		x_{1}\\
		x_{2}\\
		x_{3}\\
		x_{4}
		\end{array}
		\right]
		=
		\left[
		\begin{array}{c}
		27\\
		16\\
		12\\
		24
		\end{array}
		\right],
		\]
		where $ \det_{\varepsilon}(A)=a_{13} \times a_{24} \times a_{31} \times a_{42} = 1512 $. Due to Corollary~\ref{AA^{-}cor}, we have $ (AA^{-})_{ij}=\dfrac{\det_{\varepsilon}(A_{r}(i \Rightarrow j)) }{\det_{\varepsilon}(A)} $, for any $i,j \in \{ 1,\cdots ,4 \} $. As such, $ AA^{-} $ is
		\[
		AA^{-}=\left[
		\begin{array}{cccc}
		1&\frac{10}{7}&\frac{40}{21}&\frac{7}{8}\\
		\frac{4}{9}&1&\frac{4}{3}&\frac{7}{18}\\
		\frac{1}{3}&\frac{5}{7}&1&\frac{7}{24}\\
		\frac{8}{21}&\frac{6}{7}&\frac{8}{7}&1\\
		\end{array}
		\right].
		\]
		By Theorem~\ref{psuedothm}, we must check the condition $(AA^{-})_{ij} \times b_{j} \leq b_{i} $, for any $i,j \in \{ 1,\cdots ,4 \} $. In order to simplify the computation procedure, we check $ (AA^{-})_{ij} \leq \dfrac{b_{i}}{b_{j}} \leq \dfrac{1}{(AA^{-})_{ji}}  $, for any $ 1 \leq i\leq j \leq 4 $.\\
		Since these inequalities hold, for instance $ (AA^{-})_{12} \leq \dfrac{27}{16} \leq \dfrac{1}{(AA^{-})_{21}} $, the system $ AX=b $ has the maximal solution $ X^{*}=A^{-}b$:
		\[
		X^{*}=\left[
		\begin{array}{cccc}
		\frac{1}{9}&\frac{5}{21}&\frac{1}{3}&\frac{7}{72}\\
		\frac{1}{21}&\frac{3}{28}&\frac{1}{7}&\frac{1}{8}\\
		\frac{1}{9}&\frac{10}{63}&\frac{40}{189}&\frac{7}{72}\\
		\frac{4}{63}&\frac{1}{7}&\frac{4}{21}&\frac{1}{18}
		\end{array}
		\right]
		\left[
		\begin{array}{c}
		27\\
		16\\
		12\\
		24
		\end{array}
		\right]
		=
		\left[
		\begin{array}{c}
		4\\
		3\\
		3\\
		\frac{16}{7}
		\end{array}
		\right].
		\]
	\end{ex}
	\paragraph{\textbf{Extended Cramer's rule.}} We know that in classic linear algebra over fields, commutative rings and in a fortiori over commutative semirings (see \cite{taninverse}), Cramer's rule is a useful method for determining the unique solution of a linear system whenever the system matrix is invertible. Furthermore, Sararnrakskul proves that a square matrix $A$ over a semifield is invertible if and only if every row and every column of $A$ contains exactly one nonzero element ( see \cite{sara}). As such, Cramer's rule can be used only for this type of matrices over an idempotent semifield. Now, we introduce a new version of Cramer's rule to obtain the maximal solution of a linear system, based on the pseudo-inverse of the system matrix. In the next theorem, we present the extended Cramer's rule over idempotent semifields whenever the $ \varepsilon $-determinant of the system matrix is nonzero, which is an extension of Theorem~4 in \cite{jam.cramer}.
	\begin{theorem}\label{cramerthm}
		Let $ A \in M_{n}(S) $, $ b $ be a regular vector of size $ n $, and $ \det_{\varepsilon}(A) \neq 0$. Then the following statements are equivalent:
		\begin{enumerate}
			\item  the system $ AX= b $ has the maximal solution $ X^{*}=(\det_{\varepsilon}(A)^{-1} \otimes \det_{\varepsilon}(A_{[i]}))_{i=1}^{n}$, and
			\item  $(AA^{-})_{ij}\otimes b_{j} \leq_{S} b_{i} $ for any $i,j \in \underline{n}$,
		\end{enumerate}
		where $A_{[i]} $ is the matrix obtained from replacing the $ i $-th column of $ A $ by the column vector $b $.	
	\end{theorem}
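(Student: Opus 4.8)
The plan is to deduce Theorem~\ref{cramerthm} from Theorem~\ref{psuedothm} by showing that the candidate solution in statement~(1) is literally the pseudo-inverse solution, namely
$$\bigl(\det_{\varepsilon}(A)^{-1}\otimes\det_{\varepsilon}(A_{[i]})\bigr)_{i=1}^{n}=A^{-}b .$$
Since $\det_{\varepsilon}(A)\neq 0$, the pseudo-inverse $A^{-}=\det_{\varepsilon}(A)^{-1}adj_{\varepsilon}(A)$ is well defined, so once this identity is established, statement~(1) asserts precisely that the system $AX=b$ admits $A^{-}b$ as a maximal solution, and then the equivalence $(1)\Leftrightarrow(2)$ is exactly the statement of Theorem~\ref{psuedothm}.

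To prove the identity, I would first record the column cofactor (Laplace) expansion of the $\varepsilon$-determinant: for any $M\in M_{n}(S)$ and any fixed column index $i$,
$$\det_{\varepsilon}(M)=\bigoplus_{k=1}^{n}\varepsilon^{(k+i)}\bigl(m_{ki}\otimes\det_{\varepsilon}(M(k|i))\bigr).$$
This follows by partitioning the $n!$ terms in the definition of $\det_{\varepsilon}(M)$ according to the row $k$ for which $\sigma(k)=i$, checking that $\tau(\sigma)\equiv(k+i)+\tau(\sigma')\pmod 2$ where $\sigma'$ is the permutation induced on the complementary index sets, and using $\varepsilon^{(2)}=\mathrm{id}$ together with $\varepsilon(a\otimes b)=\varepsilon(a)\otimes b=a\otimes\varepsilon(b)$ to pull out the $\varepsilon$-powers; this is the $\varepsilon$-analogue of the classical expansion, in the spirit of the determinant identities of~\cite{tan1}. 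Applying it to $M=A_{[i]}$ along its $i$-th column, whose entries are $b_{1},\dots,b_{n}$, and noting that deleting column $i$ of $A_{[i]}$ yields the same submatrix as deleting column $i$ of $A$, so that $A_{[i]}(k|i)=A(k|i)$, gives
$$\det_{\varepsilon}(A_{[i]})=\bigoplus_{k=1}^{n}\varepsilon^{(k+i)}\bigl(b_{k}\otimes\det_{\varepsilon}(A(k|i))\bigr).$$
On the other hand, by the definition of $adj_{\varepsilon}(A)$ as the transpose of $(\varepsilon^{(i+j)}\det_{\varepsilon}(A(i|j)))_{n\times n}$, its $(i,k)$-entry equals $\varepsilon^{(k+i)}\det_{\varepsilon}(A(k|i))$, whence
$$(A^{-}b)_{i}=\det_{\varepsilon}(A)^{-1}\bigoplus_{k=1}^{n}\bigl(\varepsilon^{(k+i)}\det_{\varepsilon}(A(k|i))\otimes b_{k}\bigr)=\det_{\varepsilon}(A)^{-1}\otimes\det_{\varepsilon}(A_{[i]}),$$
the last step again moving $b_{k}$ through $\varepsilon$ via $\varepsilon(a)\otimes b=\varepsilon(a\otimes b)$. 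This is the claimed identity.

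With the identity in hand the theorem is immediate: since $X^{*}=A^{-}b$, statement~(1) holds if and only if the system $AX=b$ has the maximal solution $A^{-}b$, which by Theorem~\ref{psuedothm} is equivalent to $(AA^{-})_{ij}\otimes b_{j}\leq_{S}b_{i}$ for all $i,j\in\underline{n}$, i.e.\ to statement~(2). I expect the only substantive step to be the column cofactor expansion, and the delicate part there is purely the parity bookkeeping: one must verify that the $\varepsilon$-power attached to each minor comes out to exactly $\varepsilon^{(k+i)}$, since over an idempotent semifield there is no subtraction to absorb a misplaced sign. The reduction to Theorem~\ref{psuedothm}, the identification $A_{[i]}(k|i)=A(k|i)$, and the manipulations of $\varepsilon$ through products are all routine.
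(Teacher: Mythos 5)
Your proposal is correct and takes essentially the same route as the paper: both establish the identity $A^{-}b=\bigl(\det_{\varepsilon}(A)^{-1}\otimes\det_{\varepsilon}(A_{[i]})\bigr)_{i=1}^{n}$ by expanding $(adj_{\varepsilon}(A)b)_{i}$ as a cofactor expansion of $\det_{\varepsilon}(A_{[i]})$ along its $i$-th column, and then invoke Theorem~\ref{psuedothm} for the equivalence with the inequalities $(AA^{-})_{ij}\otimes b_{j}\leq_{S}b_{i}$. The only difference is that you sketch a proof of the $\varepsilon$-Laplace expansion (with the $\varepsilon^{(k+i)}$ parity bookkeeping, which you actually track more carefully than the paper's displayed computation), whereas the paper simply cites Laplace's theorem for semirings (Theorem~3.3 of \cite{tan1}).
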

	\begin{proof}
		Due to Theorem~\ref{psuedothm}, the inequalities $ (AA^{-})_{ij}\otimes b_{j} \leq_{S} b_{i} $ for every $i,j \in \underline{n}$  are necessary and sufficient conditions for the system $ AX=b $ to have the maximal solution $ X^{*}=A^{-}b$. Consequently,
		
		\begin{align}
		x^{*}_{i} \nonumber &= (A^{-}b)_{i} \\ \nonumber
		&=det_{\varepsilon}(A)^{-1} \otimes (adj_{\varepsilon}(A)b)_{i} \\ \nonumber
		&=det_{\varepsilon}(A)^{-1} \otimes ( \bigoplus_{j=1}^{n}((adj_{\varepsilon}(A))_{ij} \otimes b_{j} ) ) \\ \nonumber
		&=det_{\varepsilon}(A)^{-1} \otimes ( \bigoplus_{j=1}^{n}(det_{\varepsilon}(A(j|i)) \otimes b_{j} ) )\\
		&=det_{\varepsilon}(A)^{-1} det_{\varepsilon}(\left[
		\begin{array}{ccccccc}
		a_{11}&\cdots&a_{1(j-1)}&b_{1}&a_{1(j+1)}\cdots&a_{1n}\\
		\vdots&~&\vdots&\vdots&\vdots&\vdots&~\\
		a_{n1}&\cdots&a_{n(j-1)}&b_{n}&a_{n(j+1)}\cdots&a_{nn}
		\end{array}
		\right] ) \\
		&=det_{\varepsilon}(A)^{-1} \otimes det_{\varepsilon}(A_{[i]}), \nonumber
		\end{align}
		where the equality $ (3.4) $ is obtained from Laplace's theorem for semirings (see Theorem~3.3 in~\cite{tan1}).
	\end{proof}	
	The normalization method for solving a system of linear equations over ``$\max-\rm plus$ algebra" is already introduced in \cite{oli.norm}. Here, we extend this method to idempotent semifields. To this end, we start by defining the following normalization method.
	
	\paragraph{\textbf{Normalization method.}} Consider the linear system $ AX=b $, where $ A \in M_{m \times n}(S) $, $ b $ is a regular vector of size $ m $ and $ X $ is an unknown vector of size $ n $. Let the $ j$-th column of $ A $, denoted by $ A_{j}$, be a regular vector of size $ m $, for any $ j \in \underline{n}$. In fact, the system matrix, $ A $, contains no zero element. Then the normalized matrix of $ A $ is
	\[
	\tilde{A}=
	\left[
	\begin{array}{c|c|c|c}
	\hat{A}_{1}^{-1} A_{1} & \hat{A}_{2}^{-1} A_{2} &\cdots& \hat{A}_{n}^{-1} A_{n}
	\end{array}
	\right],
	\]
	where $ \hat{A}_{j}= \sqrt[m]{a_{1j}\otimes\cdots\otimes a_{mj}} $, for any $ j \in \underline{n}$. The normalized vector of the regular vector $ b $ is defined similarly as follows.
	$$\tilde{b} = \hat{b}^{-1} b ,$$
	where  $ \hat{b}= \sqrt[m]{b_{1}\otimes\cdots\otimes b_{m}}.$
	As such, the normalized system corresponding to the system $ AX=b $, denoted by $ \tilde{A}Y=\tilde{b} $, is obtained as follows.
	\begin{align*}
	AX=b
	&\ \Rightarrow \bigoplus_{j=1}^{n}( A_{j}x_{j} ) =b \\
	&\ \Rightarrow \bigoplus_{j=1}^{n}( A_{j}(\hat{A}_{j}^{-1} \otimes \hat{A}_{j} \otimes x_{j}) ) =(\hat{b} \otimes \hat{b}^{-1})b \\
	&\ \Rightarrow \bigoplus_{j=1}^{n}( \tilde{A}_{j}( \hat{A}_{j}\otimes x_{j}) ) =\hat{b}\tilde{b} \\
	&\ \Rightarrow \bigoplus_{j=1}^{n}( \tilde{A}_{j}( \hat{A}_{j} \otimes \hat{b}^{-1}\otimes x_{j}) ) =\tilde{b} \\
	&\ \Rightarrow \bigoplus_{j=1}^{n}(\tilde{A}_{j} y_{j} ) =\tilde{b} \\
	&\ \Rightarrow \tilde{A}Y=\tilde{b},
	\end{align*}
	where $ Y= (\hat{A}_{j}\otimes \hat{b}^{-1} )X $. The $ i$-th equation of the normalized system $ \tilde{A}Y=\tilde{b} $ is
	$$ (\tilde{a}_{i1} \otimes y_{1}) \oplus \cdots \oplus(\tilde{a}_{in} \otimes y_{n})=\tilde{b}_{i}, $$
	which implies $ y_{j} \leq_{S} \tilde{b}_{i}$, for any $ i \in \underline{m}$ and  $ j \in \underline{n} $. The associated normalized matrix of the system $ AX=b $ can be defined as $ Q= (q_{ij}) \in M_{m\otimes n}(S) $ with $ q_{ij}= \tilde{b}_{i} \otimes \tilde{a}_{ij}^{-1} $. We choose $ y_{j} $ as the minimum element of the $ j$-th column of $ Q $ with respect to the order $ `` \leq_{S} " $. Note further that in the normalization process of  column $ A_{j} $, we disregard zero elements, that is if $ a_{ij}=0 $ for some $ i \in \underline{m} $ and $ j \in \underline{n} $, then
	$$ \hat{A}_{j}= \sqrt[m]{a_{1j}\otimes\cdots\otimes a_{(i-1)j} \otimes a_{(i+1)j}\otimes\cdots\otimes a_{mj}}. $$
		and $ \tilde{a}_{ij}= 0 \otimes \hat{A}_{j}^{-1} = 0 $. As such, we set  $ q_{ij} := 0^{-} $, where  $ a <_{S} 0^{-}$ for any $ a \in S $. The $ j$-th column minimum element of $ Q $ is therefore determined regardless of $q_{ij} $. Thus, without loss of generality, we consider every column of the system matrix to be regular.
	
	In the next theorem, we present a necessary and sufficient condition on the column minimum elements of the associated normalized matrix to solve the systems $ \tilde{A}Y=\tilde{b} $ and consequently $ AX=b$, since we choose $ y_{j} $ as the $ j$-th column minimum element, for any $ j \in \underline{n} $.
	\begin{theorem}\label{normthm}
		Let $ AX=b $ be a linear system of equations with $ A \in M_{m \times n }(S) $ and the regular $ m$-vector $ b $. Then the system $ AX=b $ has solutions if and only if every row of the associated normalized matrix $ Q $ contains at least one column minimum element.
	\end{theorem}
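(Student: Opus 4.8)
The plan is to pass to the normalized system $\tilde{A}Y=\tilde{b}$ and work there throughout. As derived just above the statement, the substitution $y_j=\hat{A}_j\otimes\hat{b}^{-1}\otimes x_j$ is a bijection between the solution set of $AX=b$ and that of $\tilde{A}Y=\tilde{b}$ (it is invertible because $\hat{A}_j$ and $\hat{b}$ are nonzero, using radicability), so $AX=b$ is solvable exactly when $\tilde{A}Y=\tilde{b}$ is. Following the construction preceding the theorem, let $Y=(y_j)_{j=1}^{n}$ be the vector whose $j$-th entry $y_j$ is the minimum element, with respect to $\leq_S$, of the $j$-th column of $Q$. This $Y$ is well defined because $S$ is totally ordered, the index set $\underline{m}$ is finite, and every column of $\tilde{A}$ is regular, so each $q_{ij}=\tilde{b}_i\otimes\tilde{a}_{ij}^{-1}$ is a nonzero element of $S$; this is precisely why the paragraph before the theorem reduces to the regular-column case.

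I would first record two facts that hold unconditionally. First, any solution $Z=(z_j)_{j=1}^n$ of $\tilde{A}Z=\tilde{b}$ satisfies $z_j\leq_S y_j$ for every $j$: the $i$-th equation $\bigoplus_{j=1}^n(\tilde{a}_{ij}\otimes z_j)=\tilde{b}_i$ together with the extremal property $a\leq_S a\oplus b$ gives $\tilde{a}_{ij}\otimes z_j\leq_S\tilde{b}_i$, hence $z_j\leq_S\tilde{b}_i\otimes\tilde{a}_{ij}^{-1}=q_{ij}$ for all $i$, hence $z_j\leq_S y_j$. Second, the vector $Y$ itself always satisfies $\tilde{A}Y\leq_S\tilde{b}$ componentwise: since $y_j\leq_S q_{ij}$, compatibility of $\otimes$ with the order gives $\tilde{a}_{ij}\otimes y_j\leq_S\tilde{a}_{ij}\otimes q_{ij}=\tilde{b}_i$ for all $i,j$, and then compatibility of $\oplus$ gives $\bigoplus_{j=1}^n(\tilde{a}_{ij}\otimes y_j)\leq_S\tilde{b}_i$.

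The crux, and the only step with genuine content, is the identity: $\tilde{A}Y=\tilde{b}$ if and only if every row of $Q$ contains a column minimum element. By the second fact above, for each fixed $i$ one needs only to decide when $\bigoplus_{j=1}^n(\tilde{a}_{ij}\otimes y_j)\geq_S\tilde{b}_i$. Because a finite $\oplus$-sum in a totally ordered idempotent semifield equals its largest summand, this holds exactly when some summand already attains $\tilde{b}_i$, i.e. $\tilde{a}_{ij}\otimes y_j=\tilde{b}_i$ for some $j$, equivalently $y_j=\tilde{b}_i\otimes\tilde{a}_{ij}^{-1}=q_{ij}$, equivalently the minimum of the $j$-th column of $Q$ is attained in row $i$, i.e. row $i$ meets a column minimum. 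This is where total order and additive idempotency are essential; the rest is monotonicity bookkeeping.

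To conclude I would assemble the two directions. For the converse direction, if every row of $Q$ contains a column minimum element, the crux gives $\tilde{A}Y=\tilde{b}$, so $Y$ is a solution of the normalized system and hence $AX=b$ is solvable. For the forward direction, if $AX=b$ is solvable, choose a solution $Z$ of $\tilde{A}Z=\tilde{b}$; by the first fact $z_j\leq_S y_j$ for all $j$, so compatibility of $\otimes$ and $\oplus$ with $\leq_S$ yields $\tilde{b}_i=\bigoplus_{j=1}^n(\tilde{a}_{ij}\otimes z_j)\leq_S\bigoplus_{j=1}^n(\tilde{a}_{ij}\otimes y_j)$, which together with the second fact forces $\bigoplus_{j=1}^n(\tilde{a}_{ij}\otimes y_j)=\tilde{b}_i$ for every $i$, i.e. $\tilde{A}Y=\tilde{b}$; the crux then gives that every row of $Q$ contains a column minimum element. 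I do not anticipate a serious obstacle; the only points needing care are the well-definedness of $Y$ (finiteness of $\underline{m}$ plus regularity of the columns of $\tilde{A}$) and being explicit that ``a finite $\oplus$-sum equals its maximum'' is exactly what converts the analytic solvability condition into the combinatorial row-covering condition.
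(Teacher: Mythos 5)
Your proof is correct and follows essentially the same route as the paper's: pass to the normalized system, take $Y$ to be the vector of column minima of $Q$, and use additive idempotency together with the total order to show that $\tilde{A}Y=\tilde{b}$ holds exactly when every row of $Q$ meets a column minimum. The only difference is presentational: you state explicitly the domination fact $z_j\leq_S y_j$ for any solution $Z$ (which the paper leaves implicit, relying on the discussion preceding the theorem, in its contradiction argument for the forward direction), so your write-up is, if anything, slightly more complete.
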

	\begin{proof}
		Let the system $ AX=b $ have solutions. Assume that the $ i$-th row of $ Q $ contains no column minimum element, that is $ y_{j} \neq q_{ij} $ and so $ y_{j} <_{S} \tilde{b}_{i} \otimes \tilde{a}_{ij}^{-1} $, for any $ j \in \underline{n} $. Thus, in the $ i$-th equation of the system $ \tilde{A}Y=\tilde{b} $ we have
		$$ \bigoplus_{j=1}^{n}(\tilde{a}_{ij} \otimes y_{j}) <_{S} \tilde{b}_{i}, $$
		which implies the system $\tilde{A}Y=\tilde{b} $ and consequently the system $ AX=b$ have no solution and is a contradiction.\\
		Conversely, suppose that every row of $ Q $ contains at least one column minimum element, that is for every $ i \in \underline{m} $, there exist some $ k \in \underline{n} $ such that  $ y_{k} = \tilde{b}_{i} \otimes \tilde{a}_{ik}^{-1} $. Then the $ i$-th equation of the system $ \tilde{A}Y=\tilde{b} $, for every $ i \in \underline{m} $ is as follows
		\begin{align}
		\bigoplus_{j=1}^{n}(\tilde{a}_{ij} \otimes y_{j}) \nonumber &= \bigoplus_{\begin{subarray}{c}
			j=1 \\ j \neq k
			\end{subarray}}^{n}(\tilde{a}_{ij} \otimes y_{j}) \oplus (\tilde{a}_{ik} \otimes y_{k}) \\
		&=\bigoplus_{\begin{subarray}{c}
			j=1 \\ j \neq k
			\end{subarray}}^{n}(\tilde{a}_{ij} \otimes y_{j}) \oplus \tilde{b}_{i} \\ \nonumber
		&=\tilde{b}_{i}, \nonumber
		\end{align}
		where the equality $ (3.5) $ is obtained from choosing $ y_{j} $ as the $ j$-th column minimum element of the matrix $ Q $. 
		That means the system $ \tilde{A}Y=\tilde{b} $ and consequently, the system $ AX=b $ has solutions. Indeed, the obtained solution $ X=((\hat{A}_{i} \otimes \hat{b}_{i}^{-1})\otimes y_{i})_{i=1}^{n} $ of the system $ AX=b $ is maximal.
	\end{proof}
	\begin{remark}
		The normalization method and the associated normalized matrix of a linear system provide comprehensive information which enables us to compute the degrees of freedom and determine the column rank and the row rank of a matrix. These applications over idempotent semifields are similar to the descriptive methods stated in \cite{oli.norm} over $``\max-plus~algebra"$.
	\end{remark}
	\paragraph{\textbf{Equivalence classes of matrices.}} Let $ A, A' \in M_{m\times n}(S) $. We say $ A$ is equivalent to $ A'$ if the $ j$-th column of $ A' $ is a scalar multiple of the $ j$-th column of $ A $, for any $ j \in \underline{n} $, that is
	$$A \sim A' \Longleftrightarrow  A'= \left[
	\begin{array}{c|c|c}
	\alpha_{1}A_{1}&\cdots&\alpha_{n}A_{n}
	\end{array}
	\right],
	$$
	for some $ \alpha_{1}, \cdots, \alpha_{n} \in S \setminus\{ 0  \} $. As such, the equivalence class of $ A $ is
	$$\left[ A \right]= \{A'\in M_{m \times n}(S) \vert A \sim A'\}. $$
	The equivalence relation on vectors is defined analogously.
	\section{Equivalance of the solution methods}
	In this section, we show that the presented solution methods in the previous section are equivalent and their maximal solutions are identical.
	
	In the next theorem, we intend to show the equivalence of the pseudo-inverse method and the normalization method for solving a linear system.
	\begin{theorem}\label{A^{-}norm}
		Let $ AX= b $, where $ A \in M_{n}(S), $ and $ b $ be a regular vector of size $ n $. If $ (AA^{-})_{ij} \otimes b_{j} \leq_{S} b_{i} $, for any $i,j \in \underline{n} $, then $ (A^{-}b)_{k} = b_{i} \otimes a_{ik}^{-1} $, for some $ i \in \underline{n} $ and for any $ k \in \underline{n} $.
	\end{theorem}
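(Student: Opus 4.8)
The plan is to identify the $k$-th component of $X^{*}=A^{-}b$ with the $\leq_{S}$-smallest element of the set $\{\,b_{i}\otimes a_{ik}^{-1}:i\in\underline{n}\,\}$ — i.e. with the $k$-th column minimum that underlies the normalization method — and then to use total orderedness of $S$ to conclude that this minimum is attained, say at an index $i=i_{k}$, which yields exactly $(A^{-}b)_{k}=b_{i_{k}}\otimes a_{i_{k}k}^{-1}$.

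First I would invoke Theorem~\ref{psuedothm}: the hypothesis $(AA^{-})_{ij}\otimes b_{j}\leq_{S}b_{i}$ guarantees that $X^{*}=A^{-}b$ is genuinely a solution of $AX=b$, in fact the maximal one, so in particular $AX^{*}=b$. Writing the $i$-th equation $\bigoplus_{k=1}^{n}(a_{ik}\otimes x_{k}^{*})=b_{i}$ and using the extremal property $a_{ik}\otimes x_{k}^{*}\leq_{S}\bigoplus_{k=1}^{n}(a_{ik}\otimes x_{k}^{*})=b_{i}$ of $\oplus$, then multiplying through by $a_{ik}^{-1}$ (legitimate since, as in the discussion preceding Theorem~\ref{normthm}, every column of $A$ may be assumed regular), I obtain $x_{k}^{*}\leq_{S}b_{i}\otimes a_{ik}^{-1}$ for all $i,k\in\underline{n}$. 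Hence, letting $c_{k}$ denote the $\leq_{S}$-least element of the finite set $\{\,b_{i}\otimes a_{ik}^{-1}:i\in\underline{n}\,\}$ (which exists because $S$ is totally ordered), we get $x_{k}^{*}\leq_{S}c_{k}$ for every $k$.

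The heart of the argument is the reverse inequality $x_{k}^{*}\geq_{S}c_{k}$. Suppose instead $x_{k_{0}}^{*}<_{S}c_{k_{0}}$ for some $k_{0}$, and let $X'$ be obtained from $X^{*}$ by replacing its $k_{0}$-th entry with $c_{k_{0}}$, keeping all other entries. For each $i$, the $k_{0}$-th summand of $(AX')_{i}$ satisfies $a_{ik_{0}}\otimes c_{k_{0}}\leq_{S}a_{ik_{0}}\otimes(b_{i}\otimes a_{ik_{0}}^{-1})=b_{i}$, while every other summand is unchanged and already $\leq_{S}b_{i}$; so $(AX')_{i}\leq_{S}b_{i}$. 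On the other hand, since $c_{k_{0}}\geq_{S}x_{k_{0}}^{*}$ and both $\oplus$ and $\otimes$ are order-preserving, $(AX')_{i}\geq_{S}(AX^{*})_{i}=b_{i}$. Thus $AX'=b$, so $X'$ is a solution with $X'\geq_{S}X^{*}$ and $X'\neq X^{*}$, contradicting the maximality of $X^{*}$. Therefore $x_{k}^{*}=c_{k}$ for every $k$, and choosing $i_{k}\in\underline{n}$ with $c_{k}=b_{i_{k}}\otimes a_{i_{k}k}^{-1}$ completes the proof.

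The one step that I expect to require genuine care is this last one: recognizing that maximality of $X^{*}$ forces each component up to the respective column minimum $c_{k}$. Everything else is routine bookkeeping with the induced order, the order-compatibility of $\oplus$ and $\otimes$, and the identity $AX^{*}=b$ already supplied by Theorem~\ref{psuedothm}.
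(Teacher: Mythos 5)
Your proof is correct, but it routes the key step differently from the paper. The paper sandwiches $(A^{-}b)_{k}$ from two sides: the upper bound $(A^{-}b)_{k} \leq_{S} b_{i}\otimes a_{ik}^{-1}$ is extracted by expanding the hypothesis $(AA^{-})_{ij}\otimes b_{j}\leq_{S} b_{i}$ entrywise and cancelling $a_{ik}$, while the lower bound comes from invoking Theorem~\ref{normthm}: the vector of column minima of $Q$ is the normalization-method solution $X^{\bullet}$, hence $X^{\bullet}\leq_{S} X^{*}=A^{-}b$ by maximality, and equality follows. You instead obtain the upper bound $x_{k}^{*}\leq_{S} b_{i}\otimes a_{ik}^{-1}$ straight from $AX^{*}=b$ via the extremal property of $\oplus$ (equivalent in substance, slightly more direct), and you replace the appeal to Theorem~\ref{normthm} by a self-contained perturbation argument: bumping the $k_{0}$-th coordinate of $X^{*}$ up to the column minimum $c_{k_{0}}$ still yields a solution, contradicting maximality unless $x_{k}^{*}=c_{k}$ for all $k$. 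What the paper's route buys is the explicit identification $X^{\bullet}=X^{*}$, i.e.\ the equivalence of the normalization and pseudo-inverse methods, which is the purpose of the section; what your route buys is independence from the normalization machinery, using only Theorem~\ref{psuedothm}, order-compatibility of the operations, and additive idempotency (to conclude that a sum of terms each $\leq_{S} b_{i}$ stays $\leq_{S} b_{i}$), and it recovers the same identification as a byproduct. Both arguments share the same implicit assumption that the relevant entries $a_{ik}$ are nonzero so that $a_{ik}^{-1}$ exists; you flag this via the regular-column convention (the paper's $0^{-}$ device), exactly as the paper's own manipulation tacitly does.
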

	\begin{proof}
		By Theorem~\ref{psuedothm}, the inequalities $ (AA^{-})_{ij} \otimes b_{j} \leq_{S} b_{i} $ implies that the system $ AX=b $ have solutions. Due to Theorem~\ref{normthm}, every row of the associated normalized matrix $ Q $ contains at least one column minimum element. As such, the maximal solution of the system  $ AX=b $ through the normalization method is $ X^{\bullet}= (x^{\bullet}_{k})_{k=1}^{n}$, where
		$$ x^{\bullet}_{k}= \tilde{b}_{i} \otimes \tilde{a}_{ik}^{-1} \otimes \hat{b} \otimes \hat{A}_{k}^{-1}= b_{i} \otimes a_{ik}^{-1}, $$
		for some $ i \in \underline{n}$. Let $ A^{-}= (a^{-}_{ij}) $, then the inequalities $ (AA^{-})_{ij} \otimes b_{j} \leq_{S} b_{i} $ implies that
		\begin{align}
		&\ \nonumber \bigoplus_{k=1}^{n} (a_{ik}\otimes a^{-}_{kj}) \otimes b_{j} \leq_{S} b_{i},\qquad~~~~ \forall i,j \in \underline{n} \\ \nonumber
		&\ \Rightarrow  \bigoplus_{k=1}^{n} (a_{ik}\otimes a^{-}_{kj} \otimes b_{j}) \leq_{S} b_{i},\qquad \forall i,j \in \underline{n} \\ \nonumber
		&\ \Rightarrow  a^{-}_{kj} \otimes b_{j} \leq_{S} b_{i} \otimes a_{ik}^{-1},\qquad~~~~~~~ \forall i,j,k \in \underline{n} \\ \nonumber
		&\ \Rightarrow  \bigoplus_{j=1}^{n} (a^{-}_{kj} \otimes b_{j}) \leq_{S} b_{i} \otimes a_{ik}^{-1},\qquad  \forall i,k \in \underline{n} \\
		&\ \Rightarrow (A^{-}b)_{k} \leq_{S} b_{i} \otimes a_{ik}^{-1},\qquad~~~~~~~~ \forall i,k \in \underline{n}.
		\end{align}
		On the other hand, $X^{*}=A^{-}b $ is the maximal solution of the system $ AX=b $. As such, $ x^{\bullet}_{k}= b_{i} \otimes a_{ik}^{-1} \leq_{S} (A^{-}b)_{k} $, for any $ k \in \underline{n}$. This inequality and (4.1) lead to $ (A^{-}b)_{k} = b_{i} \otimes a_{ik}^{-1} $, for some $ i \in \underline{n} $ and any $ k \in \underline{n} $. Consequently, these two methods have the same maximal solution, $X^{\bullet} =X^{*} $
	\end{proof}
	\begin{ex}
		Consider the linear system $ AX=b $ given in Example~\ref{AA^{-}ex}. We want to solve the system through the normalization method. To this end, we must construct the associated normalized matrix $ Q=(\tilde{b}_{i} \otimes \tilde{a}_{ij}^{-1}) $ of the system $ AX=b $. Since the column minimum elements of $ Q $ form the maximal solution of the normalized system $ \tilde{A}Y=\tilde{b} $ and the maximal solution of the system $ AX=b $ is $ X=(x_{i})_{i=1}^{4} $, where $ x_{i}=(\hat{b}\otimes \hat{A}_{i}^{-1}) \otimes y_{i} $, for any $ i \in \{1, \cdots, 4 \} $. Without loss of generality, we can consider the matrix $ Q'= ((\tilde{b}_{i} \otimes \tilde{a}_{ij}^{-1}) \otimes (\hat{b}\otimes \hat{A}_{i}^{-1}))=(b_{i} \otimes a_{ij}^{-1}) $ which is equivalent to $ Q $ with coefficients $ \hat{b}\otimes \hat{A}_{i}^{-1} \in S\setminus \{ 0 \} $, for any $ i \in \{1, \cdots, 4 \} $. It suffices to check the condition of Theorem~\ref{normthm} on the column minimum elements of $ Q'$:
		\[
		Q'=\left[
		\begin{array}{cccc}
		\frac{27}{5}&\frac{27}{7}&\boxed{3}&\frac{27}{10}\\
		\boxed{4}&8&0^{-}&\boxed{\frac{16}{7}}\\
		\boxed{4}&0^{-}&4&\frac{12}{5}\\
		24&\boxed{3}&24&4
		\end{array}
		\right].
		\]
		Clearly, every row of $ Q' $ contains at least one column minimum element. Now, the boxed entries of $ Q' $ form the maximal solution of the system $AX=b $ as
		\[
		X^{*}=\left[
		\begin{array}{c}
		4 \\
		3 \\
		3 \\
		\frac{16}{7}
		\end{array}
		\right],
		\]
		which is the same as the maximal solution of pseudo-inverse method.
	\end{ex}
	\begin{remark}
		Theorems~\ref{A^{-}norm} and \ref{cramerthm} imply that the maximal solutions obtained from the normalization method and the extended Cramer's rule should be the same.
	\end{remark}
	The notion of generalized $LU$-factorization and the method for solving linear systems through $LU$-factorization over idempotent semifields are presented in \cite{jam.LU}. The following theorem proves that the maximal solutions of $LU$-method and the normalization method are the same.
	\begin{theorem}\label{LUnorm}
		Let $ A \in M_{n}(S)$, $ A $ have generalized $LU$-factorization and $ b $ be a regular $ n$-vector. If $ a_{ik} \otimes a_{kk}^{-1} \leq _{S} b_{i} \otimes b_{k}^{-1} $ and $ a_{(n-j)l} \otimes a_{ll}^{-1} \leq _{S} b_{(n-j)} \otimes b_{l}^{-1} $ for every  $2\leq i\leq n$, $1\leq k\leq i-1$, $1\leq j\leq n-1$, and $n-j+1\leq l\leq n$, then the $ k $-th column minimum element of the associated normalized matrix $ Q $ is $ \tilde{b}_{k} \otimes \tilde{a}_{kk}^{-1} $.
	\end{theorem}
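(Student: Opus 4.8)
The plan is to reduce the assertion about the normalized matrix $Q$ to an elementary $\leq_{S}$-comparison among the quantities $b_{i}\otimes a_{ik}^{-1}$, using the equivalence-of-matrices device already exploited in the example following Theorem~\ref{A^{-}norm}. First I would record that
$q_{ij}=\tilde{b}_{i}\otimes\tilde{a}_{ij}^{-1}=(b_{i}\otimes a_{ij}^{-1})\otimes(\hat{b}^{-1}\otimes\hat{A}_{j})$,
so the $j$-th column of $Q$ is the $j$-th column of the matrix $Q'=(b_{i}\otimes a_{ij}^{-1})$ scaled by the single nonzero factor $\hat{b}^{-1}\otimes\hat{A}_{j}\in S\setminus\{0\}$ (a column-dependent, row-independent scalar). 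Since multiplication by a nonzero element of $S$ is compatible with the total order $\leq_{S}$ and has an order-compatible inverse, it carries the $\leq_{S}$-minimum of a column to the $\leq_{S}$-minimum of the scaled column at the same row index. Hence it suffices to prove that $q'_{kk}=b_{k}\otimes a_{kk}^{-1}\leq_{S}b_{i}\otimes a_{ik}^{-1}=q'_{ik}$ for every $i\in\underline{n}$, and then transport this back through the scaling to get $q_{kk}=\tilde{b}_{k}\otimes\tilde{a}_{kk}^{-1}$ as the $k$-th column minimum of $Q$. Note $a_{kk}\neq0$ because every column of the system matrix is (without loss of generality) regular, so $\tilde{b}_{k}\otimes\tilde{a}_{kk}^{-1}$ is well defined.

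Next I would unpack the two hypotheses and check that together they enumerate all pairs $(i,k)$. The first, $a_{ik}\otimes a_{kk}^{-1}\leq_{S}b_{i}\otimes b_{k}^{-1}$ for $2\le i\le n$ and $1\le k\le i-1$, is exactly the family of pairs with $k<i$; the second, via the substitution $i:=n-j$, $k:=l$, becomes $a_{ik}\otimes a_{kk}^{-1}\leq_{S}b_{i}\otimes b_{k}^{-1}$ for all pairs with $i<k$. For $i=k$ both sides equal $1$. Thus $a_{ik}\otimes a_{kk}^{-1}\leq_{S}b_{i}\otimes b_{k}^{-1}$ holds for every $i,k\in\underline{n}$. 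Multiplying this inequality on both sides by the nonzero element $b_{k}\otimes a_{ik}^{-1}$ and using compatibility of $\otimes$ with $\leq_{S}$ yields $b_{k}\otimes a_{kk}^{-1}\leq_{S}b_{i}\otimes a_{ik}^{-1}$, that is $q'_{kk}\leq_{S}q'_{ik}$, for every $i\in\underline{n}$. Hence $b_{k}\otimes a_{kk}^{-1}$ is the $k$-th column minimum element of $Q'$, and by the previous paragraph $\tilde{b}_{k}\otimes\tilde{a}_{kk}^{-1}$ is the $k$-th column minimum element of $Q$, which is the claim.

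I do not expect a genuine obstacle: the real content is the bookkeeping observation that the two index ranges in the hypotheses together cover all off-diagonal pairs $(i,k)$ and, after clearing denominators, translate precisely into the statement that the diagonal entry wins the $k$-th column of $Q'$. The only points needing a little care are (i) the passage from $Q$ to the equivalent matrix $Q'$, where one must invoke that multiplication by a unit of $S$ preserves $\leq_{S}$ in both directions so that column minima are carried to column minima, and (ii) the observation $a_{kk}\neq0$ so that the asserted minimum value is meaningful; both follow from the standing regularity of the columns of $A$. The generalized $LU$-factorization hypothesis seems to enter only insofar as it is the setting of \cite{jam.LU} in which the displayed inequalities naturally arise (and where diagonal nonvanishing is available), rather than being used directly in this deduction.
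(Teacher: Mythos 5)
Your proposal is correct and follows essentially the same route as the paper: cover the cases $k<i$ and $k>i$ by the two hypothesis families (the second via the substitution $i:=n-j$, $k:=l$), clear denominators to get $b_{k}\otimes a_{kk}^{-1}\leq_{S}b_{i}\otimes a_{ik}^{-1}$, and rescale by $\hat{b}^{-1}\otimes\hat{A}_{k}$ to conclude for $Q$. Your remark that the generalized $LU$-factorization hypothesis (and the paper's appeal to Theorem~8 of \cite{jam.LU} and Theorem~\ref{normthm}) is not actually needed for this deduction is accurate, but this is a streamlining of the same argument rather than a different one.
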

	\begin{proof}
		Due to the extended version of  Theorem~8 in \cite{jam.LU}, the assumed inequalities imply that the linear system $ AX=b $ have the maximal solution $ X^{*}=(b_{k}\otimes a_{kk}^{-1})_{k=1}^{n} $. By Theorem~\ref{normthm}, every row of $ Q $ contains at least one column minimum element. The assumption  $ a_{ik} \otimes a_{kk}^{-1} \leq _{S} b_{i} \otimes b_{k}^{-1} $, for any  $ 2\leq i\leq n $ and $ 1\leq k\leq i-1 $ implies that
		$ b_{k} \otimes a_{kk}^{-1} \leq_{S} b_{i} \otimes a_{ik}^{-1}$ and therefore $$ (\hat{b}^{-1}\otimes b_{k}) \otimes(\hat{A}_{k} \otimes a_{kk}^{-1}) \leq_{S} (\hat{b}^{-1}\otimes b_{i}) \otimes (\hat{A}_{k} \otimes a_{ik}^{-1}), $$
		which means
		\begin{equation}\label{colmin}
		\tilde{b}_{k} \otimes \tilde{a}_{kk}^{-1} \leq_{S} \tilde{b}_{i} \otimes \tilde{a}_{ik}^{-1},
		\end{equation}
		for any  $ 2\leq i\leq n $ and $ k < i $, since the total order $ `` \leq_{S} " $ is compatible with multiplication.
		
		It suffices to show that the inequality (\ref{colmin}) holds for any $ k > i $. Letting $ i:= n-j $ and $ k:= l $, we can rewrite the inequalities  $ a_{(n-j)l} \otimes a_{ll}^{-1} \leq _{S} b_{(n-j)} \otimes b_{l}^{-1} $ as the inequalities $ a_{ik} \otimes a_{kk}^{-1} \leq _{S} b_{i} \otimes b_{k}^{-1} $, for any $ 1 \leq i \leq n-1 $ and $ k > i $. Now, the compatibility with multiplication leads to
		$$ b_{k} \otimes a_{kk}^{-1} \leq_{S} b_{i} \otimes a_{ik}^{-1} $$ and  $$ \tilde{b}_{k} \otimes \tilde{a}_{kk}^{-1} \leq_{S} \tilde{b}_{i} \otimes \tilde{a}_{ik}^{-1}. $$
		Hence, $ \tilde{b}_{k} \otimes \tilde{a}_{kk}^{-1} $ is the $ k$-th column minimum element of $ Q $.
	\end{proof}
	\begin{remark}
		Note that the maximal solution of the linear system $ AX=b $ obtained from the normalization method in Theorem~\ref{LUnorm} is
		$$ X= ( \tilde{b}_{k} \otimes \tilde{a}_{kk}^{-1} \otimes \hat{b} \otimes \hat{A}_{k}^{-1} )_{k=1}^{n}=(b_{k}\otimes a_{kk}^{-1})_{k=1}^{n}, $$
		which is the same as the maximal solution obtained from the $LU$-method.
	\end{remark}
	\begin{ex}
		Let $ A\in M_{4}(S) $ where $S=\mathbb{R}_{\min, \times}=(\mathbb{R_{+}} \cup \{+\infty\}, \min,\times,+\infty,1)$ and $\mathbb{R_{+}} $ be the set of all positive real numbers. Consider the following system $ AX=b $:	
		\[
		\left[
		\begin{array}{cccc}
		1&6&9&8\\
		6&2&7&5\\
		9&7&1&7\\
		8&5&6&3
		\end{array}
		\right]
		\left[
		\begin{array}{c}
		x_{1}\\
		x_{2}\\
		x_{3}\\
		x_{4}
		\end{array}
		\right]
		=
		\left[
		\begin{array}{c}
		4\\
		6\\
		1\\
		6
		\end{array}
		\right],
		\]
		which is given in Example~6 of \cite{jam.LU}. The system has already been solved through the $LU$-method and its maximal solution is
		\[
		X^{*}= \left[
		\begin{array}{c}
		4\\
		3\\
		1\\
		2
		\end{array}
		\right].
		\]
		Now, we solve the system by the normalization method. Without loss of generality, we consider the matrix $ Q'= (b_{i} \otimes a_{ij}^{-1} ) $, which is equivalent to the associated normalized matrix of the system $ AX=b $. Its column minimum elements determine the maximal solution of the system:
		\[
		\left[
		\begin{array}{cccc}
		\boxed{4}&\frac{2}{3}&\frac{4}{9}&\frac{1}{2}\\
		1&\boxed{3}&\frac{6}{7}&\frac{6}{5}\\
		\frac{1}{9}&\frac{1}{7}&\boxed{1}&\frac{1}{7}\\
		\frac{3}{4}&\frac{6}{5}&\boxed{1}&\boxed{2}
		\end{array}
		\right],
		\]
		where the column minimum elements of $ Q' $ are boxed with respect to the total order $`` \leq_{S}" $ on ``$\min-\rm times$ algebra" as the standard greater than or equal relation $ `` \geq " $ over $\mathbb{R}$. Thus, these elements form the maximal solution of the system, which is the same as the solution obtained from the $ LU $-method.
	\end{ex}
	\section{Concluding Remarks}\label{remarks}
	In this paper, we extended the solution methods such as the pseudo-inverse method, the extended Cramer's rule and the normalization method to idempotent semifields. Applying each of these methods, we determined the maximal solution of a linear system. Importantly, we proved that the maximal solution obtained from the $ LU $-method and the above-mentioned methods are identical.
		
\end{document}